\DeclareMathOperator{\Add}{Add}
\def\k{\kappa}
\def\l{\lambda}
\def\a{\alpha}
\def\b{\beta}
\newtheorem{theorem}{Theorem}[section]
\newtheorem{lemma}[theorem]{Lemma}
\numberwithin{equation}{section}
\def\l{\lambda}
\def\rmark{\mbox{$\rm\bf\rule{0.06em}{1.45ex}\kern-0.05em R$}}
\def\pmark{\mbox{$\rm\bf\rule{0.06em}{1.45ex}\kern-0.05em P$}}
\def\nmark{\mbox{$\rm\bf\rule{0.06em}{1.45ex}\kern-0.05em N$}}
\def\vdash{\mbox{$\rm\| \kern-0.13em -$}}
\def\l{\lambda}
\def\rmark{\mbox{$\rm\bf\rule{0.06em}{1.45ex}\kern-0.05em R$}}
\def\pmark{\mbox{$\rm\bf\rule{0.06em}{1.45ex}\kern-0.05em P$}}
\def\nmark{\mbox{$\rm\bf\rule{0.06em}{1.45ex}\kern-0.05em N$}}
\def\vdash{\mbox{$\rm\| \kern-0.13em -$}}
\newcommand{\lusim}[1]{\smash{\underset{\raisebox{1.2pt}[0cm][0cm]{$\sim$}}
{{#1}}}}
\begin{document}

\title[Woodin's surgery method]{Woodin's surgery method}

\author[Mohammad Golshani]{Mohammad
  Golshani}

  \thanks{The author's research was in part supported by a grant from IPM (No. 91030417). He also wishes to thanks Prof. Woodin for letting him to present Theorem 1.1 in the paper.}

\thanks{} \maketitle



\begin{abstract}
In this short paper we give an overview of Woodin's surgery method.
\end{abstract}

\section{Surgery method for strong cardinals}
In this section we present an abstract version of Woodin's surgery method for strong cardinals.
\begin{theorem}
(\cite{woodin}) Let $j: M \rightarrow N$ be an elementary embedding with $\k=crit(j),$ where $\k$ is inaccessible in $M,$ and $N=\{j(F)(a): F\in M, F:[\k]^{<\omega}  \rightarrow M$ and $a\in [\l]^{<\omega} \}.$ Let $\mathbb{P}=Add(\k,\nu)_M,$ where $\nu$ is a cardinal in $M$ and suppose that $j\upharpoonright\nu\in M.$ Let $G$ be $\mathbb{P}-$generic over $M$, and suppose that there exists $H$ such that:
\begin{enumerate}
\item $N \subseteq M[G],$
\item $M[G]\models N^\k \subseteq N,$
\item $H$ is $j(\mathbb{P})-$generic over $M[G].$
\end{enumerate}
Then there exists $H^*\in M[G][H]$ such that $H^*$ is $j(\mathbb{P})-$generic over $N$ and $j[G] \subseteq H^*.$
\end{theorem}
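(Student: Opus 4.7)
Let $f=\bigcup G$ and $h=\bigcup H$. In $M[G][H]$ define $h^*\colon j(\nu)\times j(\kappa)\to 2$ by
\[
  h^*(x,\gamma)=\begin{cases}
    f(\alpha,\beta) & \text{if }(x,\gamma)=(j(\alpha),\beta),\ \alpha<\nu,\ \beta<\kappa,\\
    h(x,\gamma) & \text{otherwise,}
  \end{cases}
\]
which is well-posed because $j\upharpoonright\nu\in M$ makes $j[\nu]$ and its inverse available in $M[G]$. Set $H^*:=\{\,q\in j(\mathbb{P})^N : q\subseteq h^*\,\}$. That $H^*$ is a filter is immediate, since any two members are subfunctions of the single total function $h^*$ and so have a common extension inside $H^*$. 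For $j[G]\subseteq H^*$, note that for $p\in G$ one has $\dom(p)\subseteq \nu\times\kappa$, so using $\crit(j)=\kappa$ we get $j(p)(j(\alpha),\beta)=p(\alpha,\beta)=f(\alpha,\beta)=h^*(j(\alpha),\beta)$, whence $j(p)\subseteq h^*$.

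The main step is genericity of $H^*$ over $N$. Fix a maximal antichain $A\in N$ of $j(\mathbb{P})^N$; by the $j(\kappa)$-chain condition of $j(\mathbb{P})$ in $N$, $|A|^N\le j(\kappa)$. Since $A\in N\subseteq M[G]$ and $H$ is $M[G]$-generic, pick $p\in A\cap H$. Apply surgery to $p$: define $p^*$ with $\dom(p^*)=\dom(p)$ by setting $p^*(j(\alpha),\beta)=f(\alpha,\beta)$ on the critical coordinates $\dom(p)\cap(j[\nu]\times\kappa)$ and $p^*=p$ elsewhere. Then $p^*\subseteq h^*$ is immediate. Assuming $p^*\in N$ (and hence $p^*\in j(\mathbb{P})^N\cap H^*$), maximality of $A$ together with a density argument for extensions of $p^*$ consistent with $h^*$ inside $j(\mathbb{P})^N$ yields some $q\in A\cap H^*$.

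The principal obstacle is to verify $p^*\in N$. This is where $M[G]\models N^\kappa\subseteq N$ is essential: each column $f_\alpha:=f(\alpha,-)\upharpoonright\kappa\colon \kappa\to 2$ is a $\kappa$-sequence of elements of $\{0,1\}\subseteq N$ sitting in $M[G]$, hence $f_\alpha\in N$. Combined with $j\upharpoonright\nu\in M$ for the indexing, the surgery data of $p$ should assemble into $N$. The subtle case is when $\dom(p)$ spans more than $\kappa$ many critical-region columns, since the $\kappa$-closure does not then suffice directly; here one must appeal to the $j(\kappa)$-chain condition of $j(\mathbb{P})$ in $N$ together with a column-by-column analysis to keep $p^*$ inside $N$.
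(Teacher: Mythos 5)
Your overall shape is right---surgery on $h$ along $j[\nu]$, with $j\upharpoonright\nu\in M$ and clause (2) doing the work---but at the two decisive points your sketch stops exactly where something substantive must be proved. First, $p^*\in N$: you correctly flag this as the principal obstacle, but your proposed resolution does not close it. The paper's point is that for \emph{any} $p\in j(\mathbb{P})$ the set of surgically altered coordinates, $\dom(p)\cap j[\nu]$ (in your coordinates, the critical columns meeting $\dom(p)$), has size at most $\kappa$. This is proved by writing $p=j(F)(a)$ with $F:[\kappa]^{|a|}\to\mathbb{P}$ in $M$ and observing that $\dom(p)\cap j[\nu]\subseteq j[X]$ where $X=\bigcup_x\dom(F(x))$ has size $\le\kappa$ in $M$ by inaccessibility of $\kappa$. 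Once the altered part has size $\le\kappa$ and lies in $M[G]$, clause (2) puts it, and hence $p^*$, in $N$. The ``subtle case'' you worry about (more than $\kappa$ many critical columns) never occurs, and proving that it never occurs \emph{is} the lemma; appealing to the $j(\kappa)$-c.c.\ cannot bound the domain of a single condition, so your fallback does not work.

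Second, genericity. Picking $p\in A\cap H$ and forming $p^*$ gives $p^*\in H^*$, but $p^*\notin A$ in general, and nothing you have written produces an element of $A$ that lies in $H^*$; ``a density argument for extensions of $p^*$ consistent with $h^*$'' is a restatement of the goal, not an argument. The paper closes this gap in two different ways, and you need one of them (or an equivalent idea). The first proof introduces the equivalence $p\sim q$ iff $\dom(p)=\dom(q)$ and they differ on $\le\kappa$ coordinates, and shows by a $\kappa^+$-length fusion that for each dense open $D\in N$ the set $E$ of conditions all of whose $\sim$-equivalents lie in $D$ is dense; then any $p\in H\cap E$ satisfies $p^*\sim p$, so $p^*\in D\cap H^*$. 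The second proof shows $S\cap j[\nu]\in N$ for $S=\bigcup\{\dom(q):q\in A\}$ (again via the $j(F)(a)$ counting), factors $j(\mathbb{P})\cong\mathbb{P}_0\times\mathbb{P}_1$ along this set, checks that the projection $A_1$ of the part of $A$ compatible with $h^*\upharpoonright X_0$ is a maximal antichain in $\mathbb{P}_1$, and uses genericity of $H$ on the $\mathbb{P}_1$ coordinate. Without one of these mechanisms the proof does not go through.
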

We will present two proofs of the above theorem. The first one, essentially due to Woodin, is taken from \cite{cummings}.
\begin{center}
\end{center}
{\bf First proof.} Let
\begin{center}
$g=\bigcup G: \nu \rightarrow 2,$

$h= \bigcup H: j(\nu) \rightarrow 2.$
\end{center}
Define $h^*: j(\nu) \rightarrow 2$ by
\begin{center}
 $h^*(\b) = \left\{ \begin{array}{l}
       g(\a)  \hspace{1.1cm} \text{ if } \b=j(\a),\\
       h(\b)  \hspace{1.2cm} \text{}  Otherwise.
 \end{array} \right.$
\end{center}
Let $H^*$ be the filter generated by $h^*.$ Note that $H^*=\{p^*: p\in H \},$ where for each $p\in j(\mathbb{P}),$ $p^*$ is defined by
\begin{itemize}
\item $dom(p^*)=dom(p),$
\item $p^*$ is defined by

\begin{center}
 $p^*(\b) = \left\{ \begin{array}{l}
       g(\a)  \hspace{1.1cm} \text{ if } \b=j(\a),\\
       p(\b)  \hspace{1.2cm} \text{}  Otherwise.
 \end{array} \right.$
 \end{center}
\end{itemize}
Let's first show that $H^*$ is well-defined.
\begin{lemma}
$p\in j(\mathbb{P}) \Rightarrow p^* \in j(\mathbb{P}).$
\end{lemma}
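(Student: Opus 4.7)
The plan is to verify the three ingredients for $p^* \in j(\mathbb{P}) = \Add(j(\k), j(\nu))^N$: (i) $\dom(p^*) \subseteq j(\nu)$; (ii) $|\dom(p^*)|^N < j(\k)$; and (iii) $p^* \in N$. Since $\dom(p^*) = \dom(p)$ by definition and $p \in j(\mathbb{P})$ already, conditions (i) and (ii) are immediate; the whole content of the lemma is (iii).

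My first move would be to isolate the surgery set $S := \dom(p) \cap j[\nu]$, on which $p^*$ disagrees with $p$ (off $S$ they coincide), together with the correction function $c : S \to 2$ given by $c(j(\alpha)) := g(\alpha)$, so that $p^* = (p \restricted (\dom(p) \setminus S)) \cup c$. Since $j \restricted \nu \in M \subseteq M[G]$ and $p \in N \subseteq M[G]$, both $S$ and $c$ lie in $M[G]$, and the strategy is to push them into $N$ via the closure hypothesis $M[G] \models N^\k \subseteq N$.

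The substantive step is the size bound $|S|^{M[G]} \leq \k$. For this I would use the extender-ultrapower description $N = \{j(F)(a) : F \in M,\ F : [\k]^{<\omega} \to M,\ a \in [\l]^{<\omega}\}$ to write $p = j(F)(a)$, reducing (by adjusting $F$ off the set of conditions) to the case $F(s) \in \mathbb{P}$ for every $s$. Let $D := \bigcup_{s \in [\k]^{<\omega}} \dom(F(s)) \subseteq \nu$; since each $\dom(F(s))$ has size $< \k$ and $|[\k]^{<\omega}| = \k$, one has $|D|^M \leq \k$. A Los-theorem calculation now shows $\{\alpha < \nu : j(\alpha) \in \dom(p)\} \subseteq D$: writing $\dom(p) = j(\dom \circ F)(a)$, the relation $j(\alpha) \in j(\dom \circ F)(a)$ forces the set $\{s : \alpha \in \dom(F(s))\}$ to belong to the extender measure at $a$, hence to be nonempty, so that $\alpha \in D$. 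Since $j \restricted \nu$ is injective, $|S| = |\{\alpha < \nu : j(\alpha) \in \dom(p)\}| \leq |D| \leq \k$.

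Once $|S| \leq \k$ is established, both $S$ and $c$ are realized as $\k$-sequences of elements of $N$ (namely of ordinals below $j(\nu)$ and of pairs from $j(\nu) \times 2$), so the closure hypothesis places them in $N$. Then $p^* = (p \restricted (\dom(p) \setminus S)) \cup c$ is assembled inside $N$ from elements of $N$, yielding $p^* \in N$ as required. The main obstacle is precisely the size bound $|S| \leq \k$, which crucially uses the extender-ultrapower form of $N$; the rest of the argument is routine bookkeeping.
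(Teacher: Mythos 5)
Your proof is correct and follows essentially the same route as the paper: reduce to showing $p^*\in N$, bound $\dom(p)\cap j[\nu]$ by $\k$ via the representation $p=j(F)(a)$ and the set $\bigcup_x \dom(F(x))$, and then invoke the closure $M[G]\models N^{\k}\subseteq N$. Your write-up merely spells out the \L o\'s computation and the reassembly of $p^*$ inside $N$ in slightly more detail than the paper does.
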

\begin{proof}
It suffices to show that $p^*\in N.$ But clearly $p^*\in M[G],$ so by clause $(2)$ of the theorem, it suffices to show that $X(p, p^*)=\{z: p(z)\neq p^*(z) \}$ has size $\leq \k.$ We have $X(p, p^*) \subseteq dom(p) \cap j[\nu],$ so it suffices to show that the later set has size at most $\k.$ Let $p=j(F)(a),$ where $a\in [\l]^{<\omega}, F\in M, F:[\k]^{|a|} \rightarrow M.$ We may further suppose that $\forall x\in [\k]^{|a|}, F(x)\in \mathbb{P}.$ Then
\begin{center}
$\a<\nu, j(\a)\in dom(p) \Rightarrow \exists x, \a\in dom(F(x)).$
\end{center}
So if $X= \bigcup \{dom(F(x)): x\in [\k]^{|a|} \},$ then $X\in M$ and $M[G]\models$`` $|X|\leq \k$ and $dom(p)\cap j[\nu] \subseteq j[X]$''. The result follows.
\end{proof}
It is easily seen that $H^*$ is a filter on $j(\mathbb{P}).$
\begin{lemma}
$H^*$ is $j(\mathbb{P})-$generic over $N$.
\end{lemma}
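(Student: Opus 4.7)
The plan is to leverage the genericity of $H$ over $M[G]$ by pulling dense subsets of $N$ back through the surgery map. For each dense open $D \in N$ in $j(\mathbb{P})$, define $D^\# := \{p \in j(\mathbb{P}) : p^* \in D\}$; this set lies in $M[G]$, since the surgery map uses only parameters in $M[G]$. If $D^\#$ is dense in $j(\mathbb{P})$ as computed in $M[G]$, then genericity of $H$ over $M[G]$ supplies $p \in H \cap D^\#$, giving $p^* \in D \cap H^*$.

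To establish density of $D^\#$, fix $q \in j(\mathbb{P})$. The reduction I would make is to find a \emph{surgery-fixed} element $r \leq q^*$ in $D$, meaning $r \in D$ with $r = r^*$. Given such an $r$, define $p$ by setting $\dom(p) := \dom(r)$, $p(\beta) := q(\beta)$ for $\beta \in \dom(q)$, and $p(\beta) := r(\beta)$ for $\beta \in \dom(r) \setminus \dom(q)$. A coordinatewise verification (using $r \leq q^*$, $q^* = (q^*)^*$, and $r = r^*$) shows $p \leq q$ and $p^* = r^* = r \in D$, so $p$ witnesses density of $D^\#$ below $q$.

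Producing such an $r$ is the crux of the argument and the main obstacle. My plan is a transfinite iteration inside $M[G]$: pick $r_0 \leq q^*$ in $D$ by density, and at a successor stage pick $r_{\alpha+1} \leq r_\alpha^*$ in $D$, using that $r_\alpha^* \in j(\mathbb{P}) \cap N$ by the previous lemma and that $r_\alpha^* \leq q^*$ (since the surgery is order-preserving on extensions of a surgery-fixed condition and $q^* = (q^*)^*$). At a limit stage $\alpha < \kappa^+$, the sequence $\langle r_\beta^* : \beta < \alpha \rangle$ is descending in $j(\mathbb{P})$, and a lower bound $s_\alpha \in N$ exists by $\kappa^+$-closure of $j(\mathbb{P})$ in $M[G]$—a consequence of the hypothesis $N^\kappa \subseteq N$; then pick $r_\alpha \leq s_\alpha$ in $D$.

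Set $C_\alpha := \dom(r_\alpha) \cap j[\nu]$. The previous lemma (whose proof shows $\dom(p) \cap j[\nu] \subseteq j[X]$ for some $X \in M$ of size $\leq \kappa$) bounds $|C_\alpha| \leq \kappa$ uniformly, while the sequence $\langle C_\alpha \rangle$ is weakly increasing. A cardinality argument then forces some $\alpha^* < \kappa^+$ at which $C_{\alpha^*+1} = C_{\alpha^*}$: otherwise the iteration would produce a strictly increasing $\kappa^+$-chain of subsets of $j[\nu]$ whose union exceeds $\kappa$ in size, contradicting the $\leq \kappa$ bound at the next stage. At this $\alpha^*$, the condition $r := r_{\alpha^*+1}$ gains no new $j[\nu]$-coordinates beyond $C_{\alpha^*}$, and on $C_{\alpha^*}$ it inherits the correct values of $g \circ j^{-1}$ from $r_{\alpha^*}^*$; hence $r$ is surgery-fixed and lies in $D$, completing the density argument.
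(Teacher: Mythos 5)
Your overall strategy is genuinely different from the paper's: you pull the dense set $D$ back through the surgery map to $D^{\#}=\{p: p^*\in D\}\in M[G]$ and try to verify density of $D^{\#}$ directly by locating a \emph{surgery-fixed} $r\le q^*$ in $D$, whereas the paper works with the set $E$ of conditions all of whose $\le\kappa$-sized modifications lie in $D$ and proves $E$ dense. Your reduction step (un-surgering a surgery-fixed $r\le q^*$ into a condition $p\le q$ with $p^*=r$) is correct, as is the observation that stabilization $C_{\alpha^*+1}=C_{\alpha^*}$ would yield a surgery-fixed member of $D$, and the endgame via genericity of $H$ over $M[G]$ is fine.

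The gap is the termination of your iteration. A strictly increasing chain $\langle C_\alpha:\alpha<\kappa^+\rangle$ of subsets of $j[\nu]$, each of size $\le\kappa$, is not by itself contradictory (consider increasing initial segments of a fixed subset of $j[\nu]$ of order type $\kappa^+$, which is available once $\nu\ge\kappa^+$). To extract your contradiction you must produce a condition at stage $\kappa^+$ whose domain contains $\bigcup_{\alpha<\kappa^+}C_\alpha$, and that requires a lower bound for a descending sequence of length $\kappa^+$. The only closure available to your construction is $N^\kappa\subseteq N$ inside $M[G]$, because your recursion applies the surgery map $r\mapsto r^*$ at every step, and this map (built from $g$ and $j\upharpoonright\nu$) lives in $M[G]$ but not in $N$; so the recursion cannot be run inside $N$, where the full ${<}j(\kappa)$-closure of $j(\mathbb{P})$ would be usable. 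Hence limit stages of length $\le\kappa$ are fine, but stage $\kappa^+$ is not reachable, and nothing forces the $C_\alpha$ to stabilize: each extension of $r_\alpha^*$ into $D$ may be compelled to pick up new coordinates in $j[\nu]$. The paper sidesteps exactly this: its inner lemma builds, entirely within $N$ (no surgery, no $g$, no $j$), a condition below which \emph{every} $\le\kappa$-sized value-flip on its domain lands in $D$, by enumerating all $\le\kappa$-sized subsets of the domain and exploiting ${<}j(\kappa)$-closure in $N$ for the long iteration; the surgery is applied only once, at the end, to a single condition in $H\cap E$. To salvage your approach you would need to replace the hoped-for stabilization by a robustness argument of this kind.
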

\begin{proof}
Let $D\in N$ be dense open in $j(\mathbb{P}).$ Define an equivalence relation on $j(\mathbb{P})$ by
\begin{center}
$p \sim q \Leftrightarrow dom(p)=dom(q)$ and $|\{z: p(z) \neq q(z) \}|\leq \k.$
\end{center}

Let $E=\{q\in j(\mathbb{P}): \forall p, p \sim q \Rightarrow p\in D \}.$
We show that $E$ is dense in $j(\mathbb{P}).$ First we prove the following.
\begin{lemma}
If $p\in j(\mathbb{P}),$ then there is $q\leq p$ such that
\begin{center}
$\forall r, r \sim q \Rightarrow r \cup (q\setminus p)\in D.$
\end{center}
\end{lemma}
\begin{proof}
Let $ \langle X_\a: \a<\mu \rangle, \mu < j(\k)$ be an enumeration of $\{X \subseteq dom(p): |X|\leq \k \}.$ Define by induction a decreasing sequence
 $ \langle  p_\a: \a\leq\mu \rangle$ of conditions as follows:
 \begin{itemize}
\item {\bf $\a=0$:} Let $p_0=p$,
\item {\bf $\a=\b+1$:} Suppose $p_\b$ is defined. Let
 \begin{center}
 $q(z) = \left\{ \begin{array}{l}
       p_\b(z)  \hspace{1.8cm} \text{ if } z\in dom(p_\a)\setminus X_\a,\\
       1-p_\b(z)  \hspace{1.3cm} \text{}  Otherwise.
 \end{array} \right.$
 \end{center}
Since $D$ is dense, we can find $\bar{q}\in D$ such that $\bar{q}\leq q.$ Set $p_\a=p_\b \cup (\bar{q}\setminus q).$
\item {\bf $\a$ is a limit ordinal:} Let $p_\a= \bigcup_{\b<\a} p_\b$.
\end{itemize}
Then $q=p_\mu$ is as required.
\end{proof}
\begin{lemma}
$E$ is dense in $j(\mathbb{P}).$
\end{lemma}
\begin{proof}
Let $p\in j(\mathbb{P}).$ Using the above claim $\k^+-$times, we can produce a decreasing sequence $\langle p_\a: \a<\k^+ \rangle$ of conditions extending $p$ such that for any $\a<\k^+$ if $r \sim p_\a,$ then $r\cup (p_{\a+1}\setminus p_\a)\in D.$ Let $q= \bigcup_{\a<\k^+}p_\a$. Then $q\leq p$ and $q\in E.$ To see this just note that if $r\sim q,$ then for some $\a<\k^+, X(r, q) \subseteq dom(p_\a),$ so $X(r, q)=X(r, p_\a).$
\end{proof}
Let $p\in H\cap E.$ Then $p^* \sim p,$ so $p^*\in H^*\cap D.$ The theorem follows.
\end{proof}
\begin{center}
\end{center}
{\bf Second proof.} Let $H^*$ be as defined above. We show that it is $j(\mathbb{P})-$generic over $N$. Thus let $A\in N$ be a maximal antichain of $j(\mathbb{P}).$ Then $|A| \leq j(\k).$ Set
\begin{center}
$S=\bigcup \{dom(p):p\in A \}.$
\end{center}
then $N\models$`` $S \subseteq j(\nu)$ and $|S| \leq j(\k)$''. Let $S=j(F)(a),$ where $a\in [\l]^{<\omega}, F\in M, F:[\k]^{|a|} \rightarrow M.$ We may further suppose that $M \models $`` For each $x\in dom(F), f(x) \subseteq \nu$ and $|f(x)|\leq \k$''. Set $T= \bigcup \{f(x): x\in [\k^{|a|}\}.$ Then $T\in M$ and $M\models$`` $T \subseteq \nu$ and $|T|\leq \k$''. It is easily seen that
\begin{center}
$M[G]\models$``$S\cap j[\nu] \subseteq j[T]$''.
\end{center}
Hence by clause $(2),$ $S\cap j[\nu]\in N.$ Let $X_0=S\cap j[\nu]$ and $X_1=j(\nu)\setminus X_0.$ Also set $\mathbb{P}_i=\{   p\in j(\mathbb{P}): dom(p) \subseteq X_i\}, i=0,1.$
 Then we have a natural forcing isomorphism
\begin{center}
$\pi: j(\mathbb{P}) \rightarrow \mathbb{P}_0\times \mathbb{P}_1,$
\end{center}
given by
\begin{center}
$\pi(p)=  \langle p\upharpoonright X_0, p\upharpoonright X_1 \rangle.$
\end{center}
Note that $H^*\upharpoonright X_0\in \mathbb{P}_0.$ Set
\begin{center}
$A_1=\{p\upharpoonright X_1: p\in A$ and $p$ is compatible with $h^*\upharpoonright X_0 \}.$
\end{center}
The following lemma can be proved quite easily.
\begin{lemma}
$A_1$ is a maximal antichain in $\mathbb{P}_1.$
\end{lemma}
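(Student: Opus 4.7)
The plan is to separately verify that $A_1$ is an antichain and that it is maximal in $\mathbb{P}_1,$ exploiting the product decomposition $j(\mathbb{P})\cong \mathbb{P}_0\times \mathbb{P}_1$ via $\pi$ together with the observation that $h^*\upharpoonright X_0$ is a single condition of $\mathbb{P}_0.$ This last point is the only subtle ingredient and rests on clause $(2)$ of the theorem: we already have $|X_0|\leq |T|\leq \k$ in $M[G],$ so an $M[G]$-enumeration of $X_0$ of length $\leq \k$ is a $\k$-sequence of elements of $N,$ which by $M[G]\models N^\k\subseteq N$ lies in $N;$ hence $N$ also computes $|X_0|\leq \k<j(\k),$ putting $h^*\upharpoonright X_0$ in $\mathbb{P}_0.$

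First I would show $A_1$ is an antichain. Suppose $p,q\in A$ are distinct and each compatible with $h^*\upharpoonright X_0.$ Then $p\upharpoonright X_0$ and $q\upharpoonright X_0$ each agree with $h^*\upharpoonright X_0$ on their overlaps with $X_0,$ so they agree with each other on $\dom(p)\cap \dom(q)\cap X_0$ and are thus compatible in $\mathbb{P}_0.$ Since $p\perp q$ in $j(\mathbb{P})$ and $\pi$ is a forcing isomorphism, the incompatibility must be concentrated on the $X_1$-coordinate, i.e.\ $p\upharpoonright X_1\perp q\upharpoonright X_1$ in $\mathbb{P}_1,$ giving the antichain property.

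For maximality, let $r\in\mathbb{P}_1$ be arbitrary. The union $r\cup(h^*\upharpoonright X_0)$ is a well-defined function on a set of size $<j(\k)$ (as $X_0\cap X_1=\emptyset$ and each piece has small domain), hence a condition in $j(\mathbb{P}).$ Maximality of $A$ yields some $p\in A$ and a common extension $s$ of $p$ and $r\cup(h^*\upharpoonright X_0).$ Then $p$ is compatible with $h^*\upharpoonright X_0,$ so $p\upharpoonright X_1\in A_1,$ while $s\upharpoonright X_1$ simultaneously witnesses that $p\upharpoonright X_1$ and $r$ are compatible in $\mathbb{P}_1.$ The only real obstacle is the cardinality/absoluteness point guaranteeing $h^*\upharpoonright X_0\in \mathbb{P}_0;$ once that is secured, the rest is bookkeeping inside the product decomposition.
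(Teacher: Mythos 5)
Your proof is correct: the paper itself gives no argument for this lemma (it is dismissed as provable "quite easily"), and your verification — that $h^*\upharpoonright X_0$ is a genuine condition of $\mathbb{P}_0$ via $|X_0|\leq\kappa$ and the closure clause, that incompatibility of two members of $A$ compatible with $h^*\upharpoonright X_0$ must live on the $X_1$-coordinate, and that maximality follows by testing $r\cup(h^*\upharpoonright X_0)$ against $A$ — is exactly the intended routine argument using the product decomposition. Nothing is missing.
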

On the other hand $H_1=\{p\upharpoonright X_1: p\in H \}$ is $\mathbb{P}_1-$generic, so $H_1\cap A_1\neq \emptyset.$ Let $p\in A$ be such that $p$ is compatible with
$h^*\upharpoonright X_0$ and $p\upharpoonright X_1 \in H_1 \cap A_1.$ But then $p\in H^*\cap A,$ and hence $H^*\cap A\neq \emptyset.$ The theorem follows. \hfill$\Box$

\section{Surgery method for supercompact cardinals}
In this section we prove the following theorem, which is an analogue of Theorem 1.1 for supercompact cardinals.
\begin{theorem}
Let $j: M \rightarrow N$ be an elementary embedding with $crit(j)=\k,$ where $\k$ is inaccessible in $M,$ and $N=\{j(F)(j[\l]): F\in M, F:P_\k(\l)  \rightarrow M \}$. Let $\mathbb{P}=Add(\k,\nu)_M,$ where $\nu$ is a cardinal in $M$ and suppose that $j\upharpoonright\nu\in M.$ Let $G$ be $\mathbb{P}-$generic over $M$, and suppose that there exists $H$ such that:
\begin{enumerate}
\item $N \subseteq M[G],$
\item $M[G]\models N^\l \subseteq N,$
\item $H$ is $j(\mathbb{P})-$generic over $M[G].$
\end{enumerate}
Then there exists $H^*\in M[G][H]$ such that $H^*$ is $j(\mathbb{P})-$generic over $N$ and $j[G] \subseteq H^*.$
\end{theorem}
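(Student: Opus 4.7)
The plan is to mimic the second proof of Theorem~1.1 essentially verbatim, making two systematic changes: replace the strong-cardinal representation $j(F)(a)$ with $a\in[\lambda]^{<\omega}$ by the supercompact one $j(F)(j[\lambda])$ with $F\colon P_\kappa(\lambda)\to M$, and replace the closure cardinal $\kappa$ by $\lambda$ wherever clause~(2) is invoked. (The first proof would adapt equally well, with the density iteration in Lemma~1.4 run to length $\lambda^+$ and the equivalence relation $\sim$ defined by $|\{z:p(z)\neq q(z)\}|\leq\lambda$.)

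Define $g$, $h$, $h^*$, $p^*$, $H^*$ exactly as in the proof of Theorem~1.1. The analogue of Lemma~1.2 is the first step: given $p\in j(\mathbb{P})$, write $p=j(F)(j[\lambda])$ with $F\colon P_\kappa(\lambda)\to\mathbb{P}$ in $M$, and form $X=\bigunion\{\dom(F(x)):x\in P_\kappa(\lambda)\}\subseteq\nu$. Each $F(x)$ has $|\dom(F(x))|<\kappa$, so $|X|^M\leq\lambda^{<\kappa}=\lambda$ (the standard supercompact identity). Elementarity gives $\dom(p)\subseteq j(X)$, hence $\dom(p)\cap j[\nu]\subseteq j[X]$ has $M[G]$-cardinality $\leq\lambda$. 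The patch $p^*-p$ is then a $\lambda$-sequence of pairs $(j(\alpha),g(\alpha))$, each lying in $N$; clause~(2) yields $p^*\in N$. That $H^*$ is a filter is immediate.

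For genericity, let $A\in N$ be a maximal antichain of $j(\mathbb{P})$; the $j(\kappa)^+$-c.c.\ of $j(\mathbb{P})$ in $N$ yields $|A|^N\leq j(\kappa)$, so $S=\bigunion\{\dom(p):p\in A\}\subseteq j(\nu)$ has $N$-cardinality $\leq j(\kappa)$. Writing $S=j(F)(j[\lambda])$ with (after trimming) $F(x)\subseteq\nu$ and $|F(x)|^M\leq\lambda$, the set $T=\bigunion\{F(x):x\in P_\kappa(\lambda)\}$ lies in $M$ with $|T|^M\leq\lambda^{<\kappa}=\lambda$, and elementarity delivers $S\cap j[\nu]\subseteq j[T]$ in $M[G]$. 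Clause~(2) then places $X_0:=S\cap j[\nu]$ and $h^*\restricted X_0$ in $N$. Setting $X_1=j(\nu)\setminus X_0$ and $\mathbb{P}_i=\{p\in j(\mathbb{P}):\dom(p)\subseteq X_i\}$, the canonical isomorphism $j(\mathbb{P})\cong\mathbb{P}_0\times\mathbb{P}_1$ holds in $N$. Exactly as in the second proof, $A_1=\{p\restricted X_1:p\in A\text{ compatible with }h^*\restricted X_0\}$ is a maximal antichain of $\mathbb{P}_1$ in $N$, and $H_1=\{p\restricted X_1:p\in H\}$ is $\mathbb{P}_1$-generic over $M[G]$, hence over $N$, so we can pick $p\in A$ meeting both conditions; because $\dom(p)\subseteq S$, the condition $p$ agrees with $h^*$ throughout its domain, giving $p\in H^*\cap A$.

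The principal delicacy is the cardinal bookkeeping $|X|,|T|\leq\lambda$, which rests on $\lambda^{<\kappa}=\lambda$ in $M$ --- a standard supercompact hypothesis; absent this one must first pass to a more parsimonious representative of $p$ (respectively $S$). Everything else is a transparent transcription of the proof of Theorem~1.1.
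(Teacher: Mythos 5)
Your proposal is correct and follows essentially the same route as the paper: the paper's proof of Theorem~2.1 likewise reuses the $H^*$ from Theorem~1.1, represents $S=j(F)(j[\lambda])$, bounds $T=\bigcup\{F(x):x\in P_\kappa(\lambda)\}$ by $\lambda$ in $M$, applies clause~(2) with $\lambda$ in place of $\kappa$, and then runs the product decomposition $j(\mathbb{P})\cong\mathbb{P}_0\times\mathbb{P}_1$ from the second proof verbatim. Your explicit check that $p^*\in N$ and your flagging of the implicit use of $\lambda^{<\kappa}=\lambda$ are details the paper leaves tacit, but they do not change the argument.
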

\begin{proof}
Let $g, H$ and $H^*$ be defined as before. We show that $H^*$ is as required.

Let $A\in N$ be a maximal antichain of $j(\mathbb{P}).$ Then $|A| \leq j(\k).$ Set
\begin{center}
$S=\bigcup \{dom(p):p\in A \}.$
\end{center}
Then $N\models$`` $S \subseteq j(\nu)$ and $|S| \leq j(\k)$''. Let $S=j(F)(j[\l]),$ where $F\in M, F:P_\k(\l) \rightarrow M.$ We may further suppose that $M \models $`` For each $x\in dom(F), f(x) \subseteq \nu$ and $|f(x)|\leq \k$''. Set $T= \bigcup \{f(x): x\in P_\k(\l)\}.$ Then $T\in M$ and $M\models$`` $T \subseteq \nu$ and $|T|\leq \l$''.
It is easily seen that
\begin{center}
$M[G]\models$``$S\cap j[\nu] \subseteq j[T]$''.
\end{center}
Thus  by clause $(2),$ $S\cap j[\nu]\in N.$ The rest of the argument is as before.
\end{proof}
As an application of the above theorem, we give a proof of the following theorem (compare with \cite{cummings2}, Section 13).
\begin{theorem}

Assume $GCH$ holds and $\k$ is $\k^+-$supercompact. Then there is a generic extension in which $\k$ remains $\k^+-$supercompact and $2^\k=\k^{++}.$
\end{theorem}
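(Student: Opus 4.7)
The plan is to force with $\mathbb{P}=Add(\k,\k^{++})$ and lift a $\k^+$-supercompactness embedding via Theorem 2.2. Fix $j:V\to M$ arising from a normal measure on $P_\k(\k^+)$, so that $M^{\k^+}\sse M$ and $M=\{j(F)(j[\k^+]) : F\in V,\ F:P_\k(\k^+)\to V\}$. Let $G$ be $\mathbb{P}$-generic over $V$. Since $\mathbb{P}$ is $\k^+$-cc and $\k$-directed closed, cardinals are preserved and $V[G]\models 2^\k=\k^{++}$.

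To lift $j$, I would force further over $V[G]$ with $j(\mathbb{P})$ (as a forcing in $M[G]$) to obtain a generic $H$, which is then automatically $j(\mathbb{P})$-generic over $M[G]$. The hypotheses of Theorem 2.2 with $\l=\k^+$ and $\nu=\k^{++}$ are to be verified as follows: $M\sse V[G]$ is immediate; the closure clause follows, via the standard name-chasing argument, from $\k^+$-closure of $M$ together with $\k^+$-cc of $\mathbb{P}$; and the clause $j\upharpoonright\nu\in M$ is invoked in the proof of Theorem 2.2 only to ensure $j[T]\in M$ for sets $T\in M$ with $|T|\leq\l$, which is automatic by $\k^+$-closure. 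Theorem 2.2 then yields $H^*\in V[G][H]$ that is $j(\mathbb{P})$-generic over $M$ with $j[G]\sse H^*$, so $j$ lifts to $j^+:V[G]\to M[H^*]$ inside $V[G][H]$.

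The core of the proof is to introduce the derived measure $U^*=\{X\in V[G]:X\sse P_\k(\k^+)\text{ and }j[\k^+]\in j^+(X)\}$ (using $j[\k^+]\in M$ by $\k^+$-closure) and to show $U^*\in V[G]$. Its normality, fineness, $\k$-completeness, and ultrafilter property are standard consequences of the derivation from $j^+$. The main obstacle is bringing $U^*$ back into $V[G]$. Given $X\in V[G]$ with $X\sse P_\k(\k^+)$, the $\k^+$-cc of $\mathbb{P}$ together with $|P_\k(\k^+)^V|=\k^+$ lets me choose a nice $\mathbb{P}$-name $\dot{X}\in V$ whose support $S\sse\k^{++}$ has size $\leq\k^+$. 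Then $j(\dot{X})$ has support contained in $j[S]\sse j[\k^{++}]$, so $j^+(X)=j(\dot{X})[H^*\upharpoonright j[S]]$. By the definition of $h^*$ in the surgery construction, $H^*$ agrees on coordinates in $j[\k^{++}]$ with the $j$-translate of $G$---explicitly $h^*(j(\a))=g(\a)$---so $H^*\upharpoonright j[S]$ is completely determined by $G\upharpoonright S$ and $j\upharpoonright S$, both of which lie in $V[G]$ (the latter because $j\upharpoonright\k^{++}\in V$). Consequently $j^+(X)$ is computable in $V[G]$, so ``$j[\k^+]\in j^+(X)$'' is decidable there, giving $U^*\in V[G]$. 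The ultrapower of $V[G]$ by $U^*$ then witnesses that $\k$ remains $\k^+$-supercompact in $V[G]$.
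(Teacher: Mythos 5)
Your plan omits the preparatory reverse Easton iteration, and this is fatal rather than a simplification. You invoke the surgery theorem with ground model $V$, target model $N=M=Ult(V,U)$, and $\mathbb{P}=\Add(\k,\k^{++})$; its clause (2) then demands $V[G]\models M^{\k^+}\sse M$. The name-chasing argument you cite gives closure of $M[G]$, not of $M$: since $M\sse V$ and $G$ adds subsets of $\k$ not in $V$, we have $V[G]\not\models M^{\k}\sse M$, let alone $M^{\k^+}\sse M$. This hypothesis is not decorative --- it is exactly what makes the surgery produce legal conditions. The surgered condition $p^*$ agrees with $g=\bigcup G$ on $dom(p)\cap j[\k^{++}]$, and the conditions $p\in j(\mathbb{P})$ with $\k\sse dom(p)$ form a dense set; for such $p$ (noting $j$ fixes $\k$ pointwise, so $\k\sse j[\k^{++}]$) the condition $p^*$ codes $g\upharpoonright\k$, a Cohen-generic subset of $\k$ over $V$. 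Hence $p^*\notin V\supseteq M$, your $H^*$ is not even a subset of $j(\mathbb{P})^M$, and no lift of $j$ onto $V[G]$ with target $M[H^*]$ can be produced this way. The paper's proof exists precisely to get around this: it first forces with the iteration $\mathbb{P}_\k$, builds $H$ for the interval $(\k,j(\k))$ inside $V[G*g]$, lifts to $j:V[G]\to M[G*g*H]$, and only then performs surgery on $\Add(j(\k),j(\k^{++}))$ with target model $N=M[G*g*H]$. Because $\mathbb{P}_\k*\Add(\k,\k^{++})$ is an initial segment of $j(\mathbb{P}_\k)$, this $N$ contains $g$ and is closed under $\k^+$-sequences in $V[G*g]$, so clause (2) holds and the surgered conditions really lie in $N$.

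A second, independent gap is your argument that $U^*\in V[G]$. If $\dot X$ is a nice name with support $S$, $|S|\le\k^+$, then $j(\dot X)$ has support $j(S)$, not $j[S]$; $j(S)$ has $M$-cardinality up to $j(\k^+)$ and is not contained in $j[S]$, so $j^+(X)$ is not determined by $H^*\upharpoonright j[S]$ and cannot be recovered from $G\upharpoonright S$ and $j\upharpoonright S$ alone. The paper instead derives $U^*\in V[G*g]$ from the observation $(*)$ that the entire tail $\mathbb{R}*\Add(j(\k),j(\k^{++}))$ is ${\le}\k^+$-closed over $V^{\mathbb{P}}$, so that the generics used to define $j^*$ do not add the relevant sets. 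You would need some replacement for this step even after repairing the lifting.
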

\begin{proof}
Let $\mathbb{P}=\mathbb{P}_\k*\lusim{\Add}(\k, \k^{++})$ be the reverse Easton iteration for adding $\a^{++}-$many new Cohen subsets of $\a,$ using $\Add(\a, \a^{++}),$ for all inaccessible cardinals $\a\leq \k,$ and let $G*g$ be $\mathbb{P}-$generic over $V$. Let $j:V  \rightarrow M\simeq Ult(V,U),$ where $U$ is a normal measure on $P_\k(\k^+),$ so that $M=\{j(F)(j[\k^+]): F\in V, F: P_\k(\k^+) \rightarrow V \}.$
Also let $j(\mathbb{P})=\mathbb{P}*\lusim{\mathbb{R}}*\lusim{\Add}(j(\k), j(\k^{++})).$

By standard forcing arguments we can find $H\in V[G*g]$ which is $j(\mathbb{P}_\k)=\mathbb{P}*\lusim{\mathbb{R}}-$generic over $M,$ and since $j[G] \subseteq G*g*H$, We can lift $j$ to some $j: V[G] \rightarrow M[G*g*H].$

Let $h$ be $\Add(j(\k), j(\k^{++}))_{M[G*g*H]}-$generic over $V[G*g*H].$  Applying Theorem 2.1, there exists $h^*$ such that we have the lifting $j^*: V[G*g] \rightarrow M[G*g*H*h^*].$ Working in $V[G*g*H*h],$ define $U^*$ on $P_\k(\k^+)$ by
\begin{center}
$X\in U^* \Leftrightarrow j[\k^+]\in j^*(X).$
\end{center}
Note that
\\
$(*)$$\hspace{3.5cm}$ $V^{\mathbb{P}}\models$``$\mathbb{R}*\lusim{\Add}(j(\k), j(\k^{++}))$ is $\leq \k^+-$closed''.

Using $(*)$, $U^*\in V[G*g],$ and $V[G*g]\models$``$U^*$ is a normal measure on $P_\k(\k^+)$''. The theorem follows.
\end{proof}

School of Mathematics, Institute for Research in Fundamental Sciences (IPM), P.O. Box:
19395-5746, Tehran-Iran.

E-mail address: golshani.m@gmail.com

\end{document}